\theoremstyle{plain}
\newtheorem{conjecture}{Conjecture}
\newtheorem{theorem}{Theorem}
\newtheorem{lemma}{Lemma}
\theoremstyle{definition}
\theoremstyle{definition}
\numberwithin{equation}{section}
\begin{document}


\baselineskip=17pt



\title[On Moebius orthogonality for subshifts of finite type]{On Moebius orthogonality for subshifts of finite type with positive topological entropy. }

\author[D. Karagulyan]{D. Karagulyan}
\address{Department of Mathematics, Royal Institute of Technology, S-100 44 Stockholm, Sweden}
\email{davitk@kth.se}

\date{}

\begin{abstract}
In this note we prove that Moebius orthogonality does not hold for subshifts of finite type with positive topological entropy. This, in particular, shows that all $C^{1+\alpha}$ surface diffeomorphisms  with positive entropy correlate with the Moebius function.
\end{abstract}

\subjclass[2010]{Primary 37B10; Secondary 37A35, 11Y35}

\keywords{Subshifts of finite type, entropy, M{\"o}bius orthogonality}

\maketitle

\section{Introduction}
Let $\mu$ denote the M{\"o}bius function, i.e.
$$
\mu(n) = \begin{cases} 
(-1)^k & \mbox{ if } n=p_1 p_2 \cdots p_k \mbox{ for distinct primes } p_i, \\
0& \mbox{ otherwise}.\\
\end{cases}
$$
In \cite{Sa1}, \cite{Sar} Sarnak introduced the following related conjecture. Recall
that a topological dynamical system $(Y,T)$ is a compact metric
space $Y$ with a homeomorphism $T : Y \rightarrow Y$ , and the topological entropy
$h(Y,T)$ of such a system is defined as
$$
h(Y,T)=\lim_{\epsilon\rightarrow 0}\lim_{n\rightarrow \infty}\frac{1}{n}\log N(\epsilon,n),
$$
where $N(\epsilon, n)$ is the largest number of $\epsilon$-separated points in $Y$ using the metric $d_n:Y\times Y \rightarrow (0,\infty)$ defined by 
$$
d_n(x,y)=\max_{0\leq i\leq n}d(T^ix,T^iy).
$$
A sequence $f:\mathbb{Z} \rightarrow \mathbb{C}$ is said to be deterministic if it is of the form
$$
f(n)=F(T^nx),
$$
for all $n$ and some topological dynamical system $(Y,T)$ with zero topological entropy $h(Y,T)=0$, a base  point $x \in Y$, and a continuous function $F:Y \rightarrow \mathbb{C}$.
\begin{conjecture}\label{con1}(P. Sarnak)
Let $f:\mathbb{N}\rightarrow \mathbb{C}$ be a deterministic  sequence. Then 
\begin{equation} \label{f0}
S_n(T(x),f) = \frac{1}{n} \sum_{k=1}^n \mu(k)f(k)=o(1).
\end{equation}
\end{conjecture}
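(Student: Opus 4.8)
The plan is to deduce \eqref{f0} from a decorrelation estimate for prime dilations of $f$, and then to supply that estimate from the hypothesis $h(Y,T)=0$. First, I would invoke the orthogonality criterion of K\'atai, in the form due to Bourgain, Sarnak and Ziegler: a bounded sequence $g:\mathbb N\to\mathbb C$ satisfies $\frac1N\sum_{n\le N}\mu(n)g(n)\to 0$ as soon as
$$
\frac{1}{N}\sum_{n\le N} g(pn)\,\overline{g(qn)}\ \longrightarrow\ 0\qquad(N\to\infty)
$$
for all sufficiently large distinct primes $p,q$; in fact it already suffices that the averages over distinct primes $p,q\le W$ of the absolute values of these limits tend to $0$ as $W\to\infty$. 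Applying this with $g(n)=f(n)=F(T^nx)$ reduces Conjecture~\ref{con1} to controlling the self-correlations of the orbit of $x$ along arithmetic progressions with distinct prime common differences.

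Next I would read this correlation dynamically. Since $g(pn)\overline{g(qn)}=(F\otimes\overline F)\big((T^p\times T^q)^n(x,x)\big)$, the average displayed above is an average of the observable $F\otimes\overline F$ along the orbit of $(x,x)$ in the product system $(Y\times Y,\,T^p\times T^q)$, whose topological entropy is $(p+q)\,h(Y,T)=0$. So every weak-$*$ limit point $\nu$ of the empirical measures $\frac1N\sum_{n\le N}\delta_{(T^p\times T^q)^n(x,x)}$ is $(T^p\times T^q)$-invariant and, by the variational principle, of zero measure-theoretic entropy, and the correlation in question is $\int F\otimes\overline F\,d\nu$ for such a $\nu$. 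What I would need is that, on average over large prime pairs $(p,q)$, every such $\nu$ integrates $F\otimes\overline F$ to something small --- morally, that $\nu$ is forced to be product-like, or that its dependence on $p$ and $q$ is washed out by the averaging in the criterion.

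This last step is the crux and, frankly, the real obstruction: zero topological entropy controls only the \emph{exponential} growth rate of the number of distinguishable length-$n$ orbit segments, and does not by itself rule out the kind of rigid arithmetic structure along prime progressions that would keep these correlations bounded away from $0$. This is precisely why Conjecture~\ref{con1} remains open in general, being known only for restricted classes of systems (nilsystems, horocycle flows, systems with discrete or quasi-discrete spectrum, rank-one systems, and so on). A realistic intermediate goal is the logarithmically averaged form of \eqref{f0}: there one can bring in the Matom\"aki--Radziwi\l\l\ theorem on multiplicative functions in short intervals, together with the work of Tao and of Frantzikinakis--Host deducing logarithmic Sarnak from logarithmic Chowla-type inputs, and only afterwards try to remove the logarithmic weighting, using the zero-entropy hypothesis to exclude the exceptional scales on which the Ces\`aro and logarithmic averages could disagree. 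I would expect the logarithmically averaged statement to be reachable and the passage to the full Ces\`aro average \eqref{f0} to be the genuine difficulty.
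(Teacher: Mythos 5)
This statement is Sarnak's conjecture itself, which the paper records as an open \emph{conjecture} and never proves; indeed the paper's actual theorem goes in the opposite direction, exhibiting positive-entropy subshifts of finite type that correlate with $\mu$. So there is no ``paper's own proof'' to compare against, and your proposal cannot be accepted as a proof of the statement.

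On its own terms, your sketch correctly identifies the standard reduction: the K\'atai--Bourgain--Sarnak--Ziegler orthogonality criterion reduces \eqref{f0} to showing that the averaged self-correlations $\frac1N\sum_{n\le N}F(T^{pn}x)\overline{F(T^{qn}x)}$ are small on average over distinct primes $p,q$, and you correctly recast this as integrating $F\otimes\overline F$ against weak-$*$ limits of empirical measures on the zero-entropy product system $(Y\times Y, T^p\times T^q)$. But, as you yourself concede, the final step --- forcing those limit measures to be product-like, or otherwise making $\int F\otimes\overline F\,d\nu$ small after averaging over prime pairs --- does not follow from $h(Y,T)=0$. Zero topological entropy constrains only the exponential complexity of orbit segments and is compatible with rigid arithmetic structure along prime progressions that keeps these correlations bounded away from zero; this is exactly why the conjecture is open outside special classes (nilsystems, horocycle flows, zero-entropy interval maps, etc., as cited in the paper's introduction). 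The honest conclusion is that your argument establishes a conditional reduction, not a proof, and the gap you name is the genuine open problem rather than a technical detail to be filled in.
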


In this case we also say, that the Moebius function does not correlate or is orthogonal to the sequence $f(k)$.

The conjecture is known to be true for several dynamical systems. For a Kronecker flow (that is
a translation on a compact abelian group) it is proved in \cite{V} and \cite{davenport}, while when
$(X, f)$ is a translation on a compact nilmanifold it is proved in \cite{G-T}. In \cite{B-S-Z} it is established also for horocycle flows. For orientation preserving circle-homeomorphisms and continuous interval maps of zero entropy it is proved in \cite{D}. For other references see (\cite{A-L}, \cite{AKL}, \cite{K-L}). 

In this paper we study the opposite direction of the conjecture. That is, we are interested in systems with positive entropy and their correlation properties with the Moebius function. Peter Sarnak, in his famous exposition \cite{Sar}, mentions that for a given sequence $\epsilon(n)$ satisfying certain conditions such as $\mu(n)$, one can construct a positive entropy flow orthogonal to $\epsilon(n)$ (he attributes this to Bourgain - private communication).
But this example has never been published.
Assuming Bourgains claim, it becomes of interest to construct positive entropy system which does not correlate with the Moebius function. In this context the subshifts of finite type is a natural class to examine. We mention, that the conjecture in the opposite direction has previously been considered in \cite{AKL} and \cite{D-K}. The authors construct examples of non-regular Toeplitz sequences for which the orthogonality to the Moebius function does not hold. We point out that for any measure preserving dynamical system $(X, \mathcal{B}, \nu, T)$ Sarnak's conjecture holds almost surely with respect to $\nu$ (see \cite{Sa1}). Hence the orthogonality to the Moebius function may fail only on a pathological set. Another motivation for studying subshifts of finite type comes from Katok's famous horseshoe theorem which states, that for any $C^{1+ \varepsilon}$ smooth surface diffeomorphisms with positive entropy, there is a compact invariant set $\Lambda$ such that the restriction of the map on $\Lambda$ is topologically conjugate to a subshift of finite type with positive topological entropy. So this shows, that all sufficiently smooth surface diffeomorphisms with positive entropy correlate with the Moebius function. Horseshoes also emerge in many other systems with positive entropy, such as unimodal maps, H\'{e}non maps etc.

\section{Statement and proof of the main theorem}
We are going to prove the following theorem.

\begin{theorem}
For any subshift $(T,\Sigma_{A}^{+})$ of finite type with positive topological entropy there exist a sequence $z \in \Sigma_{A}^{+}$ and a continuous test function $\phi$, for which
$$
\lim_{n \rightarrow \infty}S_n(T(z),\phi)\neq 0.
$$
\end{theorem}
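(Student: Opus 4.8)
The plan is to exploit the freedom in choosing both the point $z$ and the test function $\phi$ to build a sequence $\phi(T^k z)$ that agrees with a fixed sign pattern along a positive-density set of integers where $\mu$ is constant, so that the averages $S_n(T(z),\phi)$ cannot go to zero. The positive topological entropy hypothesis is what gives us the room to do this: a subshift of finite type with $h>0$ has, for some power, a full shift on at least two symbols as a subsystem (equivalently the transition matrix $A$ has a power with a $2\times 2$ all-ones submatrix, or one can pass to a higher block presentation so that two distinct admissible words $u,v$ of equal length $L$ can be concatenated in every order). This lets me encode an arbitrary binary sequence into a point of $\Sigma_A^+$ by reading off blocks of length $L$, and then take $\phi$ to be a continuous (indeed locally constant) function separating the cylinders $[u]$ and $[v]$, say $\phi=+1$ on $[u]$-type blocks and $\phi=-1$ on $[v]$-type blocks, extended continuously.

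The next step is to decide which binary sequence to encode. The cleanest choice: declare the $j$-th block (positions $jL,\dots,jL+L-1$) to be $u$ whenever $\mu(jL)=-1$... but $\mu$ takes three values, so instead I would restrict attention to a single residue class or use a known positive-density set on which $\mu$ is controlled. A robust approach is to use the set of squarefree integers with an even number of prime factors versus odd: both $\{n:\mu(n)=1\}$ and $\{n:\mu(n)=-1\}$ have positive lower density (each density $3/\pi^2$). So I will choose $z$ so that the block at position $j$ is $u$ if $\mu(j)=1$ and is $v$ otherwise (i.e. if $\mu(j)\in\{0,-1\}$). Then with $\phi$ as above, $\phi(T^{jL}z)=1$ exactly when $\mu(j)=1$. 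To actually estimate $S_n(T(z),\phi)$ I want to sum $\mu(k)\phi(T^k z)$; here it is convenient to only look along $k=jL$ (handle the other residues separately, or choose $\phi$ supported so it vanishes except at positions divisible by $L$ by building $z$ out of a further-subdivided block structure). Along $k=jL$, the contribution is $\sum_{j\le n/L}\mu(jL)\phi(T^{jL}z)$, and since $\mu(jL)=\mu(L)\mu(j)$ when $\gcd(j,L)=1$, on the squarefree $j$ coprime to $L$ with $\mu(j)=1$ every term equals $\mu(L)\cdot 1$, contributing a fixed sign with positive density — so the partial sums are bounded below in absolute value by a positive constant times $n$, giving a nonzero $\limsup$.

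The main obstacle, and the part requiring genuine care, is bookkeeping the contribution of the positions $k$ not of the form $jL$ and the $j$ with $\gcd(j,L)>1$ or $\mu(j)\ne 1$: I must ensure these do not cancel the main term. The slick fix is to make $\phi$ vanish on a large-measure set: subdivide each length-$L$ block so that only one distinguished interior position "carries" the $\pm1$ value of $\phi$ and $\phi=0$ elsewhere, which kills all $k\not\equiv r\pmod L$ contributions outright; then within $k\equiv r\pmod L$ one still gets a $\mu(L)\mu(j)$-type factor after removing a bounded-density bad set, and one invokes the positive density of $\{j:\mu(j)=1,\ \gcd(j,L)=1\}$ (again $\asymp 3/\pi^2$ up to an explicit constant depending on $L$) to conclude. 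One then only needs that $\phi$ is continuous — locally constant functions on $\Sigma_A^+$ are continuous, so this is automatic — and that $z$ is genuinely a point of $\Sigma_A^+$, which is where the full-shift-on-a-power (or higher-block) reduction from positive entropy is used: every concatenation of the chosen blocks is admissible. I would close by remarking that this simultaneously establishes the stated corollary about $C^{1+\alpha}$ surface diffeomorphisms via Katok's horseshoe theorem, since topological conjugacy transports the point $z$ and the test function $\phi$.
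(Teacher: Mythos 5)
Your overall strategy is the same as the paper's (encode a $\mu$-related binary sequence into $z$ by freely concatenable blocks of a fixed length $L$, detect the blocks with a locally constant $\phi$, and reduce to a positive-density statement about squarefree numbers), but there is a genuine gap at the arithmetic heart of your argument. You evaluate the correlation only along $k=jL$ and rely on the factorization $\mu(jL)=\mu(L)\mu(j)$ to get terms of the fixed sign $\mu(L)$. This collapses entirely whenever $L$ is not squarefree: then $\mu(L)=0$, in fact $\mu(k)=0$ for \emph{every} $k\equiv 0\pmod L$, so your main term is identically zero. You have no control over the arithmetic of $L$ --- it is dictated by the lengths of the admissible loops in the subshift --- and you cannot repair it by passing to longer blocks, since every multiple of a non-squarefree $L$ is again non-squarefree. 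The paper's construction avoids exactly this trap: it builds, for \emph{each} residue $s$ with $0\le s<l$, a point $z_s$ whose blocks are aligned so that $\phi(T^n z_s)=\mu(n)$ on the class $n\equiv s\pmod l$ (making each product $\mu(n)\phi(T^nz_s)=\mu^2(n)\ge 0$), and then uses a pigeonhole lemma --- since $\frac1N\sum_{k\le N}\mu^2(k)\to 6/\pi^2$, at least one residue class mod $l$ carries positive density of squarefree numbers --- to select a good $s$. Your argument is locked to the one residue class, $s=0$, that can (and often does) fail.

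A secondary, smaller gap: your device for killing the contributions from $k\not\equiv 0\pmod L$ (``only one distinguished interior position carries the $\pm1$ value of $\phi$'') is asserted rather than constructed. Since $\phi$ sees only the sequence $T^kz$ and not the index $k$, making $\phi(T^kz)$ vanish off the block lattice requires that the code words not occur at unintended offsets in any concatenation --- a recognizability/synchronization property. This is the technical content of the paper's Lemma 2, which builds the two code words from two loops at a common vertex following the patterns $00110$ and $01010$ precisely so that no spurious occurrences arise. Your sketch correctly identifies this obstacle but does not actually overcome it; combined with the $\mu(L)=0$ issue above, the proposal as written does not yet constitute a proof.
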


First we recall the definition of a subshift of finite type. Let $V$ be a finite set of $n$ symbols and $A$ be an $n\times n$ adjacency matrix with entries in $\{0,1\}$. Define
$$\Sigma_{A}^{+} = \left\{ (x_0,x_1,\ldots):
x_j \in V,  A_{x_{j}x_{j+1}}=1, j\in\mathbb{N} \right\}.$$
The shift operator $T$ maps a sequence in the one-sided shift to another by shifting all symbols to the left, i.e.
$$
\displaystyle(T(x))_{j}=x_{j+1}.
$$
The topological entropy of a subshift of finite type can be computed by computing the number of different admissible words of length up to $n$, i.e. if 
$$
B_n=\{(v_0,\cdots, v_{n-1}):v_j=x_j \hbox{ for } 0 \leq j < n \hbox{ for some } x \in \Sigma_{A}^{+}),
$$
then the topological entropy of $T$ equals
\begin{equation}\label{ent}
h(T)=\limsup_{n \rightarrow \infty}\frac{\log \#B_n}{n}.
\end{equation}

We say, that the sequence of symbols $X=\{x_k\}_{k=0}^{n}$ from $V$ is a word in $\Sigma_{A}^{+}$, if we have $A_{x_i,x_{i+1}}=1$ for all $i=0,..,n-1$. For the word $X$, the number of its elements will be denoted by $|X|$.
We say, that we have an admissible loop at $v \in V$, if there exists a finite sequence of elements $\{x_k\}_{k=0}^{n}, n \geq 1$ from $V$ such, that
$x_0=v,x_n=v$ and $x_k \neq v$ for $0<k<n$ and $A_{x_i,x_{i+1}}=1$ for all $i=0,...,n-1$.

The main property of the M{\"o}bius function, which will be used in the proof is the following well known fact (see e.g. \cite{M}, \cite{C-S})
\begin{equation} \label{f1}
\lim_{N \rightarrow \infty}\frac{1}{N}\sum_{k=1}^{N} \mu^2(k)=\frac{6}{\pi^2}.
\end{equation}
The following lemma is an easy consequence of ~\eqref{f1}. It can also be obtained from Mirsky's theorem on the patterns of arithmetic progressions in square free numbers (\cite{M})

\begin{lemma}\label{L1}
For any $M \in \mathbb{N}$ there exist an integer $0\leq s < M$, for which
\begin{equation} \label{f2}
\limsup_{N \rightarrow \infty}\frac{1}{N}\sum_{1 \leq k \leq N \atop k \equiv s \pmod{M}} \mu^2(k)>0.
\end{equation}
\end{lemma}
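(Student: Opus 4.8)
The plan is to decompose the Cesàro average appearing in~\eqref{f1} according to residue classes modulo $M$ and then apply a pigeonhole argument together with the subadditivity of $\limsup$. Fix $M \in \mathbb{N}$, and for each $s \in \{0,1,\ldots,M-1\}$ set
$$
a_s(N) = \frac{1}{N}\sum_{1 \le k \le N \atop k \equiv s \pmod{M}} \mu^2(k),
$$
so that $\sum_{s=0}^{M-1} a_s(N) = \frac{1}{N}\sum_{k=1}^{N}\mu^2(k)$ for every $N$, since the residue classes partition $\{1,\ldots,N\}$.

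Next I would invoke the elementary fact that $\limsup$ is subadditive over finite sums: for any real sequences $b^{(0)}_N,\ldots,b^{(M-1)}_N$ one has $\limsup_{N} \sum_{s} b^{(s)}_N \le \sum_{s} \limsup_{N} b^{(s)}_N$. Applying this with $b^{(s)}_N = a_s(N)$, and using~\eqref{f1} — which tells us that $\sum_{s=0}^{M-1} a_s(N)$ actually converges to $6/\pi^2$ — gives
$$
\frac{6}{\pi^2} = \lim_{N\to\infty}\sum_{s=0}^{M-1} a_s(N) \le \sum_{s=0}^{M-1}\limsup_{N\to\infty} a_s(N).
$$

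Finally, since the left-hand side is strictly positive and the right-hand side is a sum of only $M$ nonnegative terms, at least one of them must be strictly positive; that is, there exists $s \in \{0,\ldots,M-1\}$ with $\limsup_{N\to\infty} a_s(N) > 0$, which is exactly the assertion~\eqref{f2}. There is no real obstacle in this argument; the only subtlety worth noting is that one should not expect every residue class to contribute a positive density — for instance the class $s = 0$ modulo $4$ gives identically zero sums, as $\mu^2$ vanishes on multiples of $4$ — so the proof must proceed via the non-constructive pigeonhole above rather than by computing each density directly. (As the statement indicates, one could alternatively quote Mirsky's theorem, which gives the exact density of square-free integers in a fixed residue class and identifies precisely which classes have positive density, but that is more than needed here.)
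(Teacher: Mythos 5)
Your proof is correct and follows essentially the same route as the paper: decompose the Ces\`aro average from~\eqref{f1} over residue classes modulo $M$ and apply a pigeonhole argument. Your version is in fact slightly more careful, making the pigeonhole step rigorous via the subadditivity of $\limsup$, which the paper leaves implicit.
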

\begin{proof}
For this it is enough to note that
$$
\frac{1}{N}\sum_{k=1}^{N} \mu^2(k) = \sum_{l=0}^{M-1} \frac{1}{N}\sum_{1 \leq k \leq N \atop k \equiv l \pmod{M}} \mu^2(k),
$$
and since the sum in the left hand side does not converge to 0, then the same must hold at least for one of the $M$ sums on the right hand side.
\end{proof}

Let us return to subshifts of finite type with positive entropy. Note, that for some $v \in V$ there are at least two different admissible loops at $v$. Otherwise the space $\Sigma_{A}^{+}$ will consist of only periodic orbits and from \eqref{ent} it will follow, that the topological entropy of $T$ is zero. Let $\gamma_1=\{x_1,x_2, .., x_n\}$ and $\gamma_2=\{y_1,y_2, .., y_m\}$ be the two loops. Define also the words $\gamma'_1=\{x_1,x_2, .., x_{n-1}\}$ and $\gamma'_2=\{y_1,y_2, .., y_{m-1}\}$.

\begin{lemma}\label{L2}
For any subshift of finite type $(T,\Sigma_{A}^{+})$ with positive topological entropy there exists a positive integer $l \in \mathbb{N}$, such that for any integer $s$, with $0 \leq s < l$, there exists an element $z_s \in \Sigma_{A}^{+}$ and a test function $\phi\in C(\Sigma_{A}^{+})$ with the property, that for any $n \in \mathbb{N}$ the following holds
\begin{equation} \label{f5}
\phi(T^{n}(z_s)) =
\begin{cases} 
\mu(n) & :\mbox{ if } n \equiv s \pmod{l}  \mbox{ and } \mu(n) \neq 0 \\
0 & :\mbox{ otherwise.} \\
\end{cases} 
\end{equation}
\end{lemma}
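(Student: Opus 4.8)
The plan is to write the relevant values of $\mu$ into the letters of $z_{s}$ near the positions $n\equiv s\pmod l$, and to take $\phi$ locally constant so that $\phi(T^{n}z_{s})$ simply reads off that value. Put $p=|\gamma_{1}'|$ and $q=|\gamma_{2}'|$, and recall that $\gamma_{1}'\neq\gamma_{2}'$, that in each $\gamma_{i}'$ the symbol $v$ occurs only as the first letter, and that each $\gamma_{i}'$ begins with $v$ and ends with a letter $u$ with $A_{uv}=1$. I will use the three words
$$
B_{+}=(\gamma_{1}')^{3q}(\gamma_{2}')^{p},\qquad
B_{-}=(\gamma_{1}')^{2q}(\gamma_{2}')^{2p},\qquad
B_{0}=(\gamma_{1}')^{q}(\gamma_{2}')^{3p},
$$
each of the common length $l:=4pq$; this $l$ will be the integer claimed by the lemma. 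Every $B_{\bullet}$ is again an admissible word beginning with $v$ and ending with a letter $u$ with $A_{uv}=1$, and $B_{+},B_{-},B_{0}$ are pairwise distinct; hence any one-sided string obtained by concatenating blocks from $\{B_{+},B_{-},B_{0}\}$, possibly preceded by a suffix of one such block, belongs to $\Sigma_{A}^{+}$.

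Now fix $s$ with $0\le s<l$. Let $z_{s}\in\Sigma_{A}^{+}$ be the concatenation of blocks of length $l$ with block boundaries exactly at the positions $\equiv s\pmod l$: for $i\ge 0$ the block occupying $[\,s+il,\ s+(i+1)l\,)$ is $B_{+}$, $B_{-}$ or $B_{0}$ according as $\mu(s+il)$ is $1$, $-1$ or $0$ (and $B_{0}$ if $s+il=0$), while the positions $[\,0,s\,)$ carry the length-$s$ suffix of $B_{0}$. Let $\phi\in C(\Sigma_{A}^{+})$ be the locally constant function equal to $1$ on the cylinder of the word $B_{+}$, to $-1$ on the cylinder of $B_{-}$, and to $0$ elsewhere.

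For \eqref{f5}: if $n\ge 1$ and $n\equiv s\pmod l$, say $n=s+il$ with $i\ge 0$, then $T^{n}z_{s}$ begins with the $i$-th block, which is $B_{+}$, $B_{-}$ or $B_{0}$ exactly according to $\mu(n)$, so (the three being distinct) $\phi(T^{n}z_{s})=\mu(n)$ when $\mu(n)\neq 0$ and $\phi(T^{n}z_{s})=0$ when $\mu(n)=0$. The substantive case is $n\not\equiv s\pmod l$, where I must show $T^{n}z_{s}$ begins with neither $B_{+}$ nor $B_{-}$. This rests on two facts. (i) Since $v$ occurs in each $\gamma_{i}'$ only as the first letter, the positions of $v$ in $z_{s}$ are exactly the starting positions of the $\gamma_{1}'/\gamma_{2}'$-pieces out of which $z_{s}$ is built (apart from a partial piece at the very front), so $z_{s}$ has a canonical factorisation into such pieces whose cut-points are the $v$-positions. (ii) Each block ends with at least $p\ge 1$ consecutive $\gamma_{2}'$-pieces (as does the suffix of $B_{0}$ forming the first $s$ letters), so no maximal run of $\gamma_{1}'$-pieces crosses a block boundary; hence every such run has length $3q$ (at the start of a $B_{+}$-block), $2q$ (at the start of a $B_{-}$-block), $q$ (at the start of a $B_{0}$-block), or at most $q$ (inside the initial $s$ letters). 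Now if $T^{n}z_{s}$ began with $B_{+}=(\gamma_{1}')^{3q}(\gamma_{2}')^{p}$, then by (i) $n$ is a cut-point and the pieces covering $[\,n,n+l\,)$ spell $(\gamma_{1}')^{3q}(\gamma_{2}')^{p}$, so there is a run of $\gamma_{1}'$-pieces of length $\ge 3q$ at $n$; by (ii) this run has length exactly $3q$ and $n$ is the start of a $B_{+}$-block, so $n\equiv s\pmod l$, a contradiction. If $T^{n}z_{s}$ began with $B_{-}$, the same argument gives a run of length $\ge 2q$ at $n$, which by (ii) equals $2q$ or $3q$; the value $3q$ is impossible since then $n$ would start a $B_{+}$-block, forcing $T^{n}z_{s}$ to begin with $B_{+}\neq B_{-}$; so the run has length $2q$, $n$ starts a $B_{-}$-block, and again $n\equiv s\pmod l$ — a contradiction.

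I expect the delicate point to be exactly this last step: the three blocks must be chosen so that $B_{+}$ and $B_{-}$ cannot occur ``out of phase'' inside a word spelled by blocks from $\{B_{+},B_{-},B_{0}\}$, including across the prefix. The choice above does this because $\{B_{+},B_{-},B_{0}\}$ is, over the two-letter alphabet $\{(\gamma_{1}')^{q},(\gamma_{2}')^{p}\}$, a copy of the comma-free code $\{AAAB,AABB,ABBB\}$; passing to the equal-length ``macro-letters'' $(\gamma_{1}')^{q}$ and $(\gamma_{2}')^{p}$ is precisely what makes all three blocks have the same length $l$, so that the block boundaries stay synchronised with the residue class of $s$ modulo $l$.
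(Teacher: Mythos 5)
Your overall strategy is the same as the paper's: concatenate equal-length coding blocks built from the two loops at $v$, one block per position in the residue class of $s$ modulo $l$, and let $\phi$ read off which block begins at the current position; the whole content of the proof is then the synchronization step, i.e.\ showing the distinguished blocks cannot occur out of phase. The paper does this with two blocks $x=\gamma_1'\gamma_1'\gamma_2'\gamma_2'\gamma_1'$, $y=\gamma_1'\gamma_2'\gamma_1'\gamma_2'\gamma_1'$ and an appeal to the recognizability of the patterns $00110$ and $01010$; your three-block comma-free code $\{A^{3}B,A^{2}B^{2},AB^{3}\}$ over the macro-letters $A=(\gamma_1')^{q}$, $B=(\gamma_2')^{p}$ is a legitimate variant, and it has the merit of producing $\phi(T^{n}z_s)=0$ when $n\equiv s\pmod{l}$ and $\mu(n)=0$, so it proves \eqref{f5} exactly as stated.

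However, your synchronization argument has a gap in the $B_{-}$ case. From the hypothesis that $T^{n}z_{s}$ begins with $B_{-}$ you correctly get $2q$ consecutive $\gamma_1'$-pieces starting at the cut-point $n$, so the \emph{maximal} run $R$ of $\gamma_1'$-pieces containing the piece at $n$ has length $2q$ or $3q$ by (ii). But if $|R|=3q$, (ii) only says that the \emph{start of $R$} is the start of a $B_{+}$-block; it does not follow that $n$ itself starts that block. Since the window forces the piece at $n+2qp$ to be $\gamma_2'$, the run $R$ ends there, so $|R|=3q$ would place its start at $n-pq$, i.e.\ $n=n'+pq$ for a $B_{+}$-block beginning at $n'$. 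Your stated contradiction (``$T^{n}z_{s}$ would begin with $B_{+}$'') simply does not address this configuration, so the case is not excluded by what you wrote. It can be excluded with one further observation: $B_{-}$ at $n$ forces the piece at $n+2qp+pq=n'+l$ to be $\gamma_2'$ (it is the $k=p\le 2p-1$ piece of the $(\gamma_2')^{2p}$ tail), whereas $n'+l$ is the start of the next block and therefore carries a $\gamma_1'$-piece; equivalently, the window $[\,n'+pq,\ n'+pq+l\,)$ actually spells $(\gamma_1')^{2q}(\gamma_2')^{p}(\gamma_1')^{q}\neq B_{-}$. With that sentence added your proof is complete.
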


\begin{proof}

We say that two finite words of equal length $a$
and $b$ have the recognizability property if in any concatenation of these two words
($aa$, $ab$, $bb$ or $ba$) neither $a$ nor $b$ have any “additional” occurrences (not shown in
the writing of the concatenated blocks). For example, it is very easy to check that
$a = 00110$ and $b = 01010$ have the recognizability property. Following the patters of $0$:s and $1$:s in $a$ and $b$, we construct the following two words,
$$
x=\gamma'_1\gamma'_1\gamma'_2\gamma'_2\gamma'_1,
$$
and
$$
y=\gamma'_1\gamma'_2\gamma'_1\gamma'_2\gamma'_1.
$$
Notice, that
$$
|x| = |y| = 3|\gamma'_1| + 2|\gamma'_2|,
$$
and let
$$
l = 3|\gamma'_1| + 2|\gamma'_2|.
$$
Since $\gamma'_1$ and $\gamma'_2$ both start with the symbol $v$ and $v$ does not occur inside these blocks, and the blocks $x$, $y$ are built of $\gamma'_1$ and $\gamma'_2$
following the $a$, $b$ patterns, and
have the same length, it is obvious that $x$, $y$ also have the recognizability property. One can also check, that any word obtained through concatenation of $x$ and $y$ will be admissible in $\Sigma_{A}^{+}$.
We now define a sequence $z_s=\tilde{z}z_0z_1...z_k...$, where $\tilde{z}$ is the word consisting of the last $s$ symbols of either $x$ or $y$, and $\{z_k\}_{k=0}^{\infty}$ is chosen as follows
\begin{equation} \label{f7}
z_k = \begin{cases} 
x & \mbox{ if } \mu(kl+s)=-1, \\
y & \mbox{ otherwise}.\\
\end{cases}
\end{equation}
Clearly $z_s$ is admissible in $\Sigma_{A}^{+}$.
We also define a test function $\phi$ in the following way
\begin{equation}\label{f8}
\phi(u)=\begin{cases} 
1 & \mbox{ if } u \mbox{ begins with the word } x, \\
-1 & \mbox{ if } u \mbox{ begins with the word } y, \\
0 & \mbox{ otherwise}, \\
\end{cases}
\end{equation}
or alternatively
$$
\phi(u)=\chi_{x}(u)-\chi_{y}(u),
$$
where $\chi_{\gamma}(u)$ is the characteristic function of the word $\gamma$. It is easy to check, that $\chi_{\gamma}(u)$ is a continuous function and hence so is $\phi(u)$.

Since the words $x$ and $y$ satisfy the recognizability property, then it is clear, that in the sequence $z_s$ they may appear only at the positions $n \equiv s \pmod{l}$. But from the construction of the test function $\phi$ and the choice of the subwords $\{z_k\}$ in $z_s$ it follows, that $\phi(T^{n}(z_s))=\mu(n)$, if $\mu(n)\neq 0$ and $n \equiv s \pmod{l}$. Hence the lemma is proved.

\end{proof}

\begin{proof} Using Lemma ~\ref{L2} we can find a positive integer $l$ such that for any integer $s$, with $0 \leq s < l$, there exists an element $z_s \in \Sigma_{A}^{+}$ and a continuous test function $\phi\in C(\Sigma_{A}^{+})$ such that ~\eqref{f5} holds. 
Now we use Lemma 1 with $M=l$ to find an integer $s$ for which ~\eqref{f2} holds. 
According to ~\eqref{f5}
$$
\sum_{n=1}^{N}\mu(n)\phi(T^n(z_s))= \sum_{1 \leq k \leq N \atop k \equiv s \pmod{l}} \mu^2(n).
$$
But from the choice of $s$
$$
\limsup_{N \rightarrow \infty}\frac{1}{N}\sum_{1 \leq k \leq N \atop k \equiv s \pmod{l}} \mu^2(n)>0.
$$
Hence
$$
\limsup_{N \rightarrow \infty}\frac{1}{N}\sum_{n=1}^{N} \mu(n)\phi(T^n(z_s))=\limsup_{N \rightarrow \infty}\frac{1}{N}\sum_{1 \leq k \leq N \atop k \equiv s \pmod{l}} \mu^2(n)>0,
$$
which finishes the proof of the theorem.

\end{proof}

One can also show, that the entropy of $T$ is positive.
By definition $|x|=|y|=l$. As we know all concatenations of the words $x$ and $y$ are admissible, hence if we consider all admissible words of length $\{nl, n=1,2 \cdots \}$, then for the number of different words of length $nl$ we will have
\begin{equation}\label{v1}
\frac{\log\#B_{nl}}{nl} \geq \frac{\log 2^{n}}{nl}=\frac{\log 2}{l}=\log2^{1/l}.
\end{equation}

Hence for the entropy of $T$, we have $h(T)\geq \log2^{1/l}$.
Now, if we consider the subshift of finite type, which consists of exactly two loops $|\gamma'_1|=|\gamma'_2|=l$ of length $l$, then in \eqref{v1} we will have equality. Hence we see, that the entropy of $T$ can be made arbitrarily small.

\subsection*{Acknowledgements}
The author would like to thank Mariusz Lema{\'n}czyk for proposing the problem in the opposite direction to Sarnak's conjecture, to Michael Benedicks for his remark about Katok's horseshoe theorem and his guidance and many valuable suggestions, Ana Rodrigues, El Houcein El Abdalaoui for useful comments about the manuscript. I would also like to thank Joanna Ku\l{}aga-Przymus for her remark concerning Lemma 1 and also the anonymous referee for carefully reading the manuscript and for giving such constructive comments, which substantially helped improving the quality of the paper.

\end{document}